\documentclass[oneside,english]{amsart}
\usepackage[T1]{fontenc}
\usepackage[latin9]{inputenc}
\usepackage{units}
\usepackage{textcomp}
\usepackage{amsthm}
\usepackage{amssymb}
\usepackage{stackrel}

\makeatletter
\numberwithin{equation}{section} \numberwithin{figure}{section}
\theoremstyle{plain}
\newtheorem{thm}{\protect\theoremname}[section]
  \theoremstyle{plain}
  \newtheorem{cor}[thm]{\protect\corollaryname}

 \DeclareMathOperator{\Span}{span}
 \DeclareMathOperator{\ext}{ext}

\makeatother

\usepackage{babel}
  \providecommand{\corollaryname}{Corollary}
\providecommand{\theoremname}{Theorem}

\begin{document}

\title{Rethinking Polyhedrality for Lindenstrauss Spaces }

\author{Emanuele Casini}

\address{Dipartimento di Scienza e Alta Tecnologia, Università dell'Insubria,
via Valleggio 11, 22100 Como, Italy}

\email{emanuele.casini@uninsubria.it}

\author{Enrico Miglierina}

\address{Dipartimento di Discipline Matematiche, Finanza Matematica ed Econometria,
Università Cattolica del Sacro Cuore, via Necchi 9, 20123 Milano,
Italy}

\email{enrico.miglierina@unicatt.it}

\author{\L ukasz Piasecki}

\address{Instytut Matematyki, Uniwersytet Marii Curie-Sk\l odowskiej, Pl.
Marii Curie-Sk\l odowskiej 1, 20-031 Lublin, Poland}

\email{piasecki@hektor.umcs.lublin.pl}
\begin{abstract}
A recent example by the authors (see \cite{Casini Miglierina =000026
Piasecki Ex}) shows that an old result of Zippin about the existence
of an isometric copy of $c$ in a separable Lindenstrauss space is
incorrect. The same example proves that some characterizations of
polyhedral Lindenstrauss spaces, based on the result of Zippin, are
false. The main result of the present paper provides a new
characterization of polyhedrality for the preduals of $\ell_{1}$ and
gives a correct proof for one of the older. Indeed, we prove that
for a space $X$ such that $X^{*}=\ell_{1}$ the following properties
are equivalent:

\begin{itemize}{\small \par}

\item [\rm {(1)}] $X$ is a polyhedral space;{\small \par}

\item [\rm {(2)}] $X$ does not contain an isometric copy of $c$;{\small \par}

\item [\rm {(3)}] $\sup\left\{ x^{*}(x)\,:\, x^{*}\in\mathrm{ext}\left(B_{X^{*}}\right)\setminus D(x)\right\} <1$
for each $x\in S_{X}$, where $D(x)=\left\{ x^{*}\in
S_{X^{*}}:x^{*}(x)=1\right\} .${\small \par}

\end{itemize}{\small \par}

By known theory, from our result follows that a generic
Lindenstrauss space is polyhedral if and only if it does not contain
an isometric copy of $c$. Moreover, a correct version of the result
of Zippin is derived as a corollary of the main result.
\end{abstract}

\keywords{Polyhedral spaces, preduals of $\ell_{1}$, Lindenstrauss
spaces, isometric copy of $c$.}

\maketitle

\section{Introduction}

Let $B_{X}$ ($S_{X}$) denote the closed unit ball (sphere) in a real
Banach space $X$ and $X^{*}$ denote the dual of $X$. If $K$ is a
closed convex subset of $X$, then by $\ext(K)$ we denote the set of
all extreme points of $K$. A closed convex subset $F$ of $B_{X}$ is
called a \emph{face} of $B_{X}$ if for every $x,y\in B_{X}$ and
$\lambda\in(0,1)$ such that $(1-\lambda)x+\lambda y\in F$ we have
$x,y\in F$. A face $F$ of $B_{X}$ is named a \emph{proper face} if
$F\neq B_{X}$. Here $c$ denotes the Banach space of all real
convergent sequences. It is well known that $c^{*}=\ell_{1}$ and the
duality inducing the standard $w^{*}$-topology in $c^{*}$ is given
by:
\[
f(x)=f(1)\lim x(i)+\sum_{i=1}^{+\infty}f(i+1)x(i)
\]
 where $f=(f(1),f(2),\dots)\in\ell_{1}$ and $x=(x(1),x(2),\dots)\in c$.
In the sequel by $\left\{ e_{n}^{*}\right\} _{n=1}^{+\infty}$ we
denote the standard basis of $\ell_{1}$. Let $H\subseteq X$, then we
denote by $\overline{H}$ the norm closure of $H$. If $A$ is a set in
$X^{*}$, then $A'$ denotes the set of all $w^{*}$\emph{-limit
points} of $A:$
\[
A'=\left\{ x^{*}\in X^{*}:x^{*}\in
w^{*}\mathrm{-cl}\left(A\setminus\left\{ x^{*}\right\}
\right)\right\} .
\]
For $x\in S_{X}$, $D(x)$ is defined as
\[
D(x)=\left\{ x^{*}\in S_{X^{*}}:x^{*}(x)=1\right\} .
\]
We recall that $D(x)$ is a $w^{*}$-compact face for every $x\in
S_{X}$ and consequently
$\ext\left(D(x)\right)=D(x)\cap\ext\left(B_{X^{*}}\right)\neq\emptyset$
by the Krein-Milman theorem.

A normed space is \emph{polyhedral} if the unit balls of all its
finite-dimensional subspaces are polytopes. This definition was
originally introduced by Klee in \cite{Klee 1960} and now it can be
seen as the classical definition of polyhedrality. Nevertheless, a
lot of different notions of polyhedrality are appeared in the
literature (see \cite{Durier =000026 Papini} and \cite{Fonf and
Vesely}). The relationships between each of them are studied in
\cite{Durier =000026 Papini,Fonf and Vesely} from the isometric
point of view whereas their isomorphic classification is established
in \cite{Fonf and Vesely}. To our aims it is important to recall two
alternative notions of polyhedrality. By following the notations
used in \cite{Fonf and Vesely} we list the following properties:

\begin{itemize}

\item [\rm {(GM)}]$x^{*}(x)<1$ whenever $x\in S_{X}$ and $x^{*}\in(\ext(B_{X^{*}}))'$
(originally introduced in \cite{Gleit =000026 McGuigan 1972});

\item [\rm {(BD)}]$\sup\left\{ x^{*}(x)\,:\, x^{*}\in\mathrm{ext}\left(B_{X^{*}}\right)\setminus D(x)\right\} <1$
for each $x\in S_{X}$ (originally introduced in \cite{Brosowski
=000026 Deutsch 1974}).

\end{itemize}

In \cite{Durier =000026 Papini}, Theorem 1 is shown (see also
Theorem 1.2 in \cite{Fonf and Vesely}) that for a general Banach
space $X$ we have
\[
{\rm property}\,{\rm (GM)}\,\Rightarrow\,{\rm property}\,{\rm
(BD)}\,\Rightarrow\, X\,{\rm is}\,{\rm a}\,{\rm polyhedral}\,{\rm
space}.
\]
Without additional assumptions none of the implications above can be
reversed. On the other hand both the papers \cite{Durier =000026
Papini} and \cite{Fonf and Vesely} recall that, if $X$ is a
Lindenstrauss space, the three properties are all equivalent (see
also Proposition 6.22 in \cite{Fonf Lindenstrauss =000026 Phelps
2001}). This result essentialy amount to the paper \cite{Gleit
=000026 McGuigan 1972} by Gleit and McGuigan where some structural
properties of polyhedral Lindenstrauss space are studied. For the
convenience of the reader we explicitely recall the theorem by Gleit
and McGuigan.
\begin{thm}
\label{thm:Gleit =000026 McGuigan}\rm{(Theorem 1.2 in \cite{Gleit
=000026 McGuigan 1972})} Let $X$ be a Lindenstrauss space. Then the
following properties are equivalent:

\begin{itemize}

\item [\rm {(1)}] $X$ enjoys property \rm{(GM)};

\item [\rm {(2)}] $X$ is a polyhedral space;

\item [\rm {(3)}] $X$ does not contain an isometric copy of $c$.

\end{itemize}
\end{thm}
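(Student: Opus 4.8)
The plan is to prove the implications in the cyclic order $(1)\Rightarrow(2)\Rightarrow(3)\Rightarrow(1)$, so that only the last one uses that $X$ is a Lindenstrauss space. The step $(1)\Rightarrow(2)$ asks for nothing new: it is precisely the general chain (GM) $\Rightarrow$ (BD) $\Rightarrow$ polyhedrality recalled above. For $(2)\Rightarrow(3)$ I would use that polyhedrality is inherited by subspaces (a finite-dimensional subspace of some $Y\subseteq X$ is a finite-dimensional subspace of $X$), so that an isometric copy of $c$ inside a polyhedral space would itself be polyhedral; hence it suffices to see that $c$ is not polyhedral. Writing $c=C(K)$ with $K=\{1/n:n\ge1\}\cup\{0\}$ and taking $f=(\cos(1/n))_{n}$, $g=(\sin(1/n))_{n}\in c$, in suitable linear coordinates the unit ball of $\Span\{f,g\}$ is the polar of $\overline{\mathrm{conv}}\{\pm(\cos(1/n),\sin(1/n)):n\ge1\}$; the points of this set lie on the unit circle and accumulate at $(1,0)$, so it has infinitely many extreme points, and therefore neither it nor its polar is a polytope.

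The heart of the matter is $(3)\Rightarrow(1)$, which I would establish in contrapositive form: if (GM) fails, then $X$ contains an isometric copy of $c$. Fix $x_{0}\in S_{X}$ and $\varphi\in(\ext(B_{X^{*}}))'$ with $\varphi(x_{0})=1$; note that then $\|\varphi\|=1$. I would first treat the case $X$ separable (which carries the essential ideas; the general case is obtained by a reduction to separable subspaces): then $B_{X^{*}}$ is $w^{*}$-metrizable, so one may pick pairwise distinct $\varphi_{k}\in\ext(B_{X^{*}})\setminus\{\varphi\}$ with $\varphi_{k}\xrightarrow{w^{*}}\varphi$, and hence $\varphi_{k}(x_{0})\to1$. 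The aim is to produce a subsequence $(\varphi_{n_{k}})$ and unit vectors $x_{1},x_{2},\dots\in X$ such that $\varphi(x_{j})=0$ for $j\ge1$, that $\varphi_{n_{k}}(x_{0})=1$ and $\varphi_{n_{k}}(x_{j})=\delta_{kj}$ for $j\ge1$, and such that $\{\varphi\}\cup\{\varphi_{n_{k}}:k\ge1\}$ is norming for $Y:=\overline{\Span}\{x_{0},x_{1},\dots\}$. Since $\varphi_{n_{k}}\xrightarrow{w^{*}}\varphi$, on $Y$ these functionals reproduce the action of the standard basis of $c^{*}=\ell_{1}$ on $c$, so $a_{0}x_{0}+\sum_{k\le m}a_{k}x_{k}\mapsto(a_{0}+a_{1},\dots,a_{0}+a_{m},a_{0},a_{0},\dots)$ extends to a surjective isometry from $Y$ onto $c$.

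The main obstacle is to carry out this construction so that it is \emph{exactly} isometric: prescribing the values of finitely many functionals on each newly chosen unit vector, ensuring that the functionals chosen so far already bound the norm on the whole span from below, and --- the subtlest point --- upgrading the convergence $\varphi_{n_{k}}(x_{0})\to1$ to the exact equality $\varphi_{n_{k}}(x_{0})=1$ by a careful choice of extreme functionals and vectors. This is exactly where I would call on the fine structure of Lindenstrauss spaces: in the separable case, the representation $X=\overline{\bigcup_{n}E_{n}}$ as the closure of an increasing union of subspaces $E_{n}$ each isometric to some finite-dimensional $\ell_{\infty}^{k}$ (see, e.g., \cite{Fonf Lindenstrauss =000026 Phelps 2001}), equivalently the ball-intersection properties of $L_{1}$-preduals. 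Inside the pieces $E_{n}$ the unit ball and its dual ball are completely explicit --- the extreme functionals being, up to sign, coordinate functionals --- so one can locate the $\varphi_{n_{k}}$ and the basis-like vectors $x_{k}$ realizing the coordinate pattern of $c$, with the required norm identities holding exactly at each finite stage, and then pass to the closure, the convergence $\varphi_{n_{k}}\to\varphi$ accounting for the limit coordinate of $c$. Organizing this induction --- in particular the bookkeeping of which coordinate is consumed at each stage, so that the three requirements above become simultaneously satisfiable --- is the real work. Once it is done, deleting (GM) from the statement yields at once the equivalence ``polyhedral $\Leftrightarrow$ no isometric copy of $c$'' for all Lindenstrauss spaces referred to in the introduction.
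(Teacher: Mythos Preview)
Your implications $(1)\Rightarrow(2)$ and $(2)\Rightarrow(3)$ are fine, but the step $(3)\Rightarrow(1)$ is not merely incomplete: it is \emph{false}, and this is precisely the point of the paper. The paper does not give a proof of Theorem~\ref{thm:Gleit =000026 McGuigan}; it quotes the Gleit--McGuigan statement and then explains that the implication $(3)\Rightarrow(1)$ has been disproved in \cite{Casini Miglierina =000026 Piasecki Ex} by a concrete hyperplane $W\subset c$ which is a polyhedral $\ell_{1}$-predual (hence contains no isometric copy of $c$) yet fails property (GM). So the ``real work'' you describe---organizing the induction on an $\ell_\infty^{n}$-filtration to produce vectors $x_{j}$ with the exact coordinate pattern of $c$---cannot be completed, because the conclusion is false. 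What you are sketching is essentially the original Zippin argument from \cite{Zippin1969}, and the paper's whole message is that that argument is incorrect.

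It is instructive to see where the sketch breaks. From the failure of (GM) you obtain $x_{0}\in S_{X}$ and $\varphi\in(\ext(B_{X^{*}}))'$ with $\varphi(x_{0})=1$, and a $w^{*}$-convergent sequence $\varphi_{k}\to\varphi$ of distinct extreme functionals. In the counterexample $W$ one can arrange that eventually $\varphi_{k}\in D(x_{0})$ (indeed (BD) holds there), so your ``upgrading'' step $\varphi_{n_{k}}(x_{0})=1$ comes for free; the obstruction lies elsewhere. Because $\varphi$ is the $w^{*}$-limit of the $\varphi_{k}$, it lies in the closed linear span of the $\varphi_{k}$ in $X^{*}=\ell_{1}$, and one can have $\varphi=\sum_{k}\varepsilon_{k}\varphi_{k}$ with all $\varepsilon_{k}>0$. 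Then the requirements $\varphi(x_{j})=0$ and $\varphi_{n_{k}}(x_{j})=\delta_{kj}$ are already incompatible on the functional side, regardless of any $\ell_{\infty}^{n}$-structure in $X$. The paper's remedy (Theorem~\ref{thm:main result}) is to replace (GM) by the strictly weaker property (BD): if (BD) fails one gets extreme functionals \emph{outside} $D(x_{0})$ with values approaching $1$, and then---using Gasparis' contractive projections rather than an explicit basis construction---one builds a contractively complemented copy of $c$. That is the correct intermediate step; (GM) is simply too strong to be equivalent to polyhedrality for Lindenstrauss spaces.
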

As an immediate consequence of the previous result and of the
implications holding for a generic Banach space, we obtain that
properties (1), (2) and (3) in Theorem \ref{thm:Gleit =000026
McGuigan} and property (BD) should be all equivalent whenever we
consider a Lindenstrauss space. The scheme of the proof of Theorem
\ref{thm:Gleit =000026 McGuigan} given in \cite{Gleit =000026
McGuigan 1972} is the following: (1) $\Rightarrow$ (2) $\Rightarrow$
(3) $\Rightarrow$ (1). It is important to remark that the proof of
Theorem \ref{thm:Gleit =000026 McGuigan} is based on the following
result of Lazar in turn.
\begin{thm}
\label{thm:Lazar}\rm{(Theorem 3 in \cite{Lazar1969})} Let $X$ be a
Lindenstrauss space. Then the following properties are equivalent:

\begin{itemize}

\item [\rm {(1)}] $X$ is a polyhedral space;

\item [\rm {(2)}] $X$ does not contain an isometric copy of $c$;

\item [\rm {(3)}] there is not infinite dimensional $w^{*}$- closed
proper faces of $B_{X^{*}}$;

\item [\rm{(4)}] for every Banach spaces $Y\subset Z$ and every
compact operator $T:Y\rightarrow X$ there exists a compact extension
$\tilde{T}:Z\rightarrow X$ with $\left\Vert \tilde{T}\right\Vert
=\left\Vert T\right\Vert $.

\end{itemize}
\end{thm}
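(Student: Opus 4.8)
I would prove the theorem by running the cycle $(1)\Rightarrow(2)\Rightarrow(3)\Rightarrow(4)\Rightarrow(1)$; the implication $(1)\Rightarrow(2)$ is a fact about $c$ alone, while the remaining implications draw, to varying degrees, on the structure theory of Lindenstrauss spaces. For $(1)\Rightarrow(2)$, since polyhedrality passes to subspaces it suffices to exhibit a three-dimensional subspace of $c$ whose unit ball is not a polytope: take $z=(\cos(1/n))_{n\ge1}$ and $w=(\sin(1/n))_{n\ge1}$ in $c$ and set $E=\Span\{\mathbf 1,z,w\}$; in the coordinates dual to $(\mathbf 1,z,w)$ one has $\|a\mathbf 1+bz+cw\|_\infty=\sup_n|(a,b,c)\cdot p_n|$ with $p_n=(1,\cos(1/n),\sin(1/n))$, so $B_E$ is the polar of $\overline{\mathrm{conv}}\{\pm p_n:n\ge1\}$, and since the $p_n$ are distinct points lying on a circle infinitely many of them are extreme points of that hull. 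Hence $B_E$ is not a polytope, and $(1)$ rules out an isometric copy of $c$ with no Lindenstrauss assumption.

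For $(2)\Rightarrow(3)$ I would argue by contraposition: given an infinite-dimensional $w^*$-closed proper face $F$ of $B_{X^*}$, the goal is to build an isometric copy of $c$ inside $X$. After the usual reduction to a separable Lindenstrauss subspace we may assume $B_{X^*}$ is $w^*$-metrizable. Since $F$ is a $w^*$-closed face, $\ext F=F\cap\ext B_{X^*}$, and this set is infinite because $F$ is infinite-dimensional; using $w^*$-compactness of $F$ we extract a sequence $(\xi_k)$ of distinct extreme points of $F$ that $w^*$-converges to some $g\in F$, which, after adjusting the sequence, we may take different from every $\xi_k$. Then $S\colon X\to c$, $(Sx)(k)=\xi_k(x)$, is a well-defined norm-one operator with $\lim_k(Sx)(k)=g(x)$. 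The heart of the matter is to show that $S$ is a metric surjection onto $c$ which splits, that is, admits a norm-one linear right inverse $R\colon c\to X$ — equivalently, that there is a norm-one $w^*$-continuous projection of $X^*$ onto $S^*(c^*)=\overline{\Span}\{g,\xi_1,\xi_2,\ldots\}$ — and this is precisely where the Lindenstrauss hypothesis is used in an essential way, through the simplicial structure of $B_{X^*}$ and the finite-dimensional ball-intersection property characterising $L_1$-preduals. I expect this splitting step to be the main obstacle. Granting $R$, the inequalities $\|\xi\|_\infty=\|SR\xi\|_\infty\le\|R\xi\|\le\|\xi\|_\infty$ show that $R$ is an isometry onto a copy of $c$.

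For $(3)\Rightarrow(4)$: a compact operator $T\colon Y\to X$ has relatively norm-compact image, so up to arbitrarily small error $T(B_Y)$ meets only finitely many maximal faces of $B_{X^*}$ — the absence of infinite-dimensional $w^*$-closed proper faces is exactly what keeps this finitary — and the finite-dimensional ball-intersection property of the Lindenstrauss space $X$ then yields, for each $\varepsilon>0$, an extension $\widetilde T_\varepsilon\colon Z\to X$ with $\|\widetilde T_\varepsilon\|\le\|T\|+\varepsilon$, after which a compactness argument as $\varepsilon\downarrow0$ produces a compact extension of norm exactly $\|T\|$. Finally, for $(4)\Rightarrow(1)$, again by contraposition: if $X$ is not polyhedral there is a finite-dimensional $E\subseteq X$ with $B_E$ not a polytope, so $B_{E^*}$ has infinitely many extreme points, and each of them extends to an extreme point of $B_{X^*}$; spacing such extensions along a convergent sequence, one builds a pair $Y\subseteq Z$ and a diagonal-type compact operator $T\colon Y\to X$ that admits no linear extension to $Z$ of norm $\|T\|$, contradicting $(4)$. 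The recurring technical point throughout — and the place where real care is needed — is the two-way dictionary between the facial (simplicial) structure of $B_{X^*}$ and the polytopal structure of the finite-dimensional subspaces of $X$, which is exactly what the $L_1$-predual machinery supplies.
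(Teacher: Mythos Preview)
There is a fundamental problem: the paper does not prove Theorem~\ref{thm:Lazar}. It quotes it from \cite{Lazar1969}, records that Lazar's proof runs through the cycle $(1)\Rightarrow(2)\Rightarrow(3)\Rightarrow(4)\Rightarrow(1)$, and then \emph{explains that the theorem is false as stated}. Concretely, the authors exhibit (in \cite{Casini Miglierina =000026 Piasecki Ex}) a hyperplane $W$ of $c$ which is a polyhedral $\ell_1$-predual --- hence satisfies (1) and (2) --- but whose dual ball $B_{W^*}$ has an infinite-dimensional $w^*$-closed proper face, so (3) fails. Thus $(2)\Rightarrow(3)$ is false, and the four conditions are not equivalent. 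The whole point of the present paper is to replace the faulty intermediate condition (3) (and the related property~(GM)) by property~(BD), which \emph{does} yield a correct characterization of polyhedrality for $\ell_1$-preduals.

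Consequently your attempt at $(2)\Rightarrow(3)$ cannot succeed, and the failure is exactly where you suspected: the ``splitting step''. Starting from an infinite-dimensional $w^*$-closed face $F$ and a $w^*$-convergent sequence $(\xi_k)\subset\ext F$ with limit $g\in F$, there is in general no contractive $w^*$-continuous projection of $X^*$ onto $\overline{\Span}\{g,\xi_1,\xi_2,\ldots\}$, and hence no isometric right inverse $R\colon c\to X$; the counterexample $W$ above witnesses this. Compare with the paper's proof of Theorem~\ref{thm:main result}: there the hypothesis is the failure of (BD), so one has a sequence of extreme points $e_{n_k}^*\notin D(x)$ with $e_{n_k}^*(x)\to 1$ and $w^*$-limit $e^*\in D(x)$. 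The crucial disjointness $\{e_{n_k}^*\}\cap\ext D(x)=\emptyset$ is what allows one first to invoke Gasparis's theorem on $K=\{e_{n_k}^*\}\cup\ext D(x)$ and then to collapse $\ext D(x)$ onto $e^*$ via a further contractive $w^*$-continuous projection $P_2$. In your setup all the $\xi_k$ already lie in the face $F$, so this mechanism is unavailable; the Lindenstrauss/simplicial structure alone does not supply the projection you need. Your sketches of $(1)\Rightarrow(2)$, $(3)\Rightarrow(4)$ and $(4)\Rightarrow(1)$ are not contradicted by the paper (indeed $(3)\Rightarrow(4)\Rightarrow(1)$ is not claimed to fail there), but they cannot rescue the full equivalence.
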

The proof of Theorem \ref{thm:Lazar}, given in \cite{Lazar1969},
proceeds as follows: (1) $\Rightarrow$ (2) $\Rightarrow$ (3)
$\Rightarrow$ (4) $\Rightarrow$ (1).

Recently some of the implications of the two quoted above theorems
have been disproved in \cite{Casini Miglierina =000026 Piasecki Ex}
by an example of a suitable space. Indeed, we found an hyperplane
$W$ of $c$ such that $W$ is a polyhedral predual of $\ell_{1}$ with
an infinite dimensional $w^{*}$-closed proper face of $B_{W^{*}}$
and not enjoying property (GM) . Thus, implications (2)
$\Rightarrow$ (3) in Theorem \ref{thm:Lazar} and (3) $\Rightarrow$
(1) in Theorem \ref{thm:Gleit =000026 McGuigan} are false. The
failure of these results are essentially due to an incorrect result
about the presence of an isometric copy of $c$ in a Lindestrauss
space stated in \cite{Zippin1969} (see below the comments after the
proof of Theorem \ref{thm:main result}) and subsequently used by
Lazar to prove the implications (2) $\Rightarrow$ (3) in Theorem
\ref{thm:Lazar}. As a consequence of these faults, the equivalence
between the polyhedrality of $X$ and the lack of an isometric copy
of $c$ in $X$ remains unproved. Let us recall that, in view of the
known results, it is enough to solve this problem in the case of
$\ell_1$-preduals. The main aim of the present paper is to provide a
correct proof of this equivalence by finding a counterpart of
property (GM) in Theorem \ref{thm:Gleit =000026 McGuigan} that works
as intermediate step in the proof that the lack of an isometric copy
of $c$ in a Lindenstrauss space implies the polyhedrality of $X$.
Indeed, under the assumption that $X$ is a predual of $\ell_1$, the
main result of the paper shows that the correct property to
substitute property (GM) is property (BD) mentioned above. This
result completely characterizes polyhedrality of $\ell_1$-preduals
by means of the structural and geometrical properties. It also
provides a correct reformulation (see Corollary \ref{cor:Zippin}) of
the result, stated in \cite{Zippin1969}, about the existence of an
isometric copy of $c$ in a separable Lindenstrauss space. Finally,
it is worth to mention that polyhedrality plays an important role in
the theory of Lindestrauss spaces. Indeed, it is deeply linked with
the norm preserving extension property for compact operators. It is
well known that a Lindenstrauss space is polyhedral if and only if
for every Banach spaces $Y\subset Z$ and every operator
$T:Y\rightarrow X$ with $\dim T(Y)\leq2$ there exists a compact
extension $\tilde{T}:Z\rightarrow X$ with $\left\Vert
\tilde{T}\right\Vert =\left\Vert T\right\Vert $ (combine Theorem 7.9
in \cite{Lindenstrauss 1964} and Theorem 4.7 in \cite{Klee 1959}).
However, the more general extension property stated in Theorem
\ref{thm:Lazar} (see also Proposition 6.23 in \cite{Fonf
Lindenstrauss =000026 Phelps 2001}) now reveals to be unproven since
the chain of implications between $(1)$ and (4) has an interruption
as remarked above.

\section{Characterization of polyhedral $\ell_{1}$-preduals}

This section is devoted to the main result of the paper. In order to
deal with a characterization of polyhedral Lindenstrauss spaces, by
known results (see, e.g. §22 in \cite{Lacey book}), the only
interesting situation to deal with is that of $\ell_{1}$-preduals.
\begin{thm}
\label{thm:main result}Let $X$ be a predual of $\ell_{1}$. The
following properties are equivalent:

\begin{itemize}

\item [\rm {(1)}] $X$ is a polyhedral space;

\item [\rm {(2)}] $X$ does not contain an isometric copy of $c$;

\item [\rm {(3)}] $\sup\left\{ x^{*}(x)\,:\, x^{*}\in\mathrm{ext}\left(B_{X^{*}}\right)\setminus D(x)\right\} <1$
for each $x\in S_{X}$ (property \rm{(BD)}).

\end{itemize}\end{thm}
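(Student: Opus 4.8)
The plan is to prove the cycle of implications $(3)\Rightarrow(1)\Rightarrow(2)\Rightarrow(3)$. The implication $(3)\Rightarrow(1)$ is immediate from the general Banach space theory recalled in the introduction: property (BD) implies polyhedrality for any Banach space (Theorem 1 in \cite{Durier =000026 Papini}). The implication $(1)\Rightarrow(2)$ is also standard and holds in great generality, since $c$ is not a polyhedral space (its finite-dimensional subspace spanned by a suitable sequence of unit vectors fails to have a polytope as unit ball, or more directly $c$ fails (BD) hence fails polyhedrality by a hereditary argument); I would simply invoke that no polyhedral space can contain an isometric copy of $c$. The whole weight of the theorem therefore rests on $(2)\Rightarrow(3)$, equivalently its contrapositive: if $X=\ell_1^*$ fails (BD), then $X$ contains an isometric copy of $c$.

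So assume (BD) fails: there is $x\in S_X$ and a sequence $\{x_n^*\}\subset \ext(B_{X^*})\setminus D(x)$ with $x_n^*(x)\to 1$. Since $X^*=\ell_1$, the extreme points of $B_{X^*}$ form a (norm-)discrete-looking but $w^*$-dense-in-themselves-possible set; the key structural facts I would exploit are the description of $\ext(B_{\ell_1})$ for an $\ell_1$-predual (the extreme points are $w^*$-limits controlled by the predual structure) together with the Choquet-type representation of points of $B_{X^*}$. Passing to a subsequence, $x_n^*\xrightarrow{w^*} y^*$ for some $y^*\in B_{X^*}$ with $y^*(x)=1$, so $y^*\in D(x)$ and in particular $y^*\neq 0$; moreover $\|x_n^*\|=1$ while $\|y^*\|\le 1$, and because the $x_n^*$ are extreme points of $B_{\ell_1}$ that do not stay in any finite-dimensional coordinate subspace (otherwise $x$ could be perturbed to defeat $x_n^*(x)\to1$ while $x_n^*\notin D(x)$), one extracts a subsequence of the $x_n^*$ that is ``almost disjointly supported'' in a sense adapted to the predual. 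This is the heart of the matter: I would build, inductively, a subsequence $\{x_{n_k}^*\}$ and a biorthogonal-like system so that the closed span in $X$ of appropriate elements, together with $x$, is isometric to $c$. Concretely one wants vectors $z_k\in X$ with $x_{n_k}^*(z_j)=\delta_{kj}$, $\|{\textstyle\sum} a_k z_k + bx\|=\max(\sup_k|a_k+b\cdot c_k|,\ |b|)$ for suitable constants, realizing the $c$-norm.

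The main obstacle, and where the $\ell_1$-predual hypothesis is essential, is controlling the $w^*$-limit $y^*$ and the ``tails'' of the $x_n^*$: one must show $y^*$ is itself extreme (or can be taken so) and that $x_n^* - y^*$ behaves, after passing to a subsequence, like a sequence of disjointly supported norm-one functionals whose supports are disjoint from that of $y^*$ — this uses that in $\ell_1$ norm convergence of $\|x_n^*\| \to \|y^*\|+\lim\|x_n^*-y^*\|$ along a subsequence and the Schur property to split each $x_n^*$ cleanly. I would then invoke the classical characterization (Michael–Pełczyński / Lindenstrauss–Wulbert, or the structure theory in \cite{Lazar1969}) that an isometric copy of $c$ inside an $\ell_1$-predual $X$ corresponds exactly to a bounded sequence of extreme points of $B_{X^*}$ converging $w^*$ to a norm-one functional along an ``isometric'' configuration; producing this configuration from the failure of (BD) is the nontrivial construction. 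Once the copy of $c$ is exhibited, $(2)\Rightarrow(3)$ follows, closing the cycle, and Corollary \ref{cor:Zippin} is read off by specializing to separable $X$ where every Lindenstrauss space is an $\ell_1$-predual in the relevant sense.
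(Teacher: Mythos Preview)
Your cycle $(3)\Rightarrow(1)\Rightarrow(2)\Rightarrow(3)$ and your handling of the two easy implications agree with the paper. The substance is entirely in the contrapositive of $(2)\Rightarrow(3)$, and there your sketch has a real gap.

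The paper argues dually, in $X^{*}$. Since $X^{*}=\ell_{1}$ isometrically, $\ext(B_{X^{*}})=\{\pm e_{n}^{*}\}$, so after passing to a subsequence the $x_{n}^{*}$ are literal standard basis vectors $e_{n_{k}}^{*}$ with $w^{*}$-limit some $e^{*}\in D(x)$. The paper does \emph{not} try to make $e^{*}$ extreme. Instead it sets $K=\{e_{n_{k}}^{*}\}_{k}\cup\ext(D(x))$, uses Alspach's lemma to see that $Y=\overline{\Span K}$ is $w^{*}$-closed, and invokes Gasparis's theorem (Theorem~1.1 in \cite{Gasparis 2002}) to obtain a $w^{*}$-continuous contractive projection $P_{1}\colon X^{*}\to Y$. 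A second, explicitly defined projection $P_{2}$ collapses every $d_{i}^{*}\in\ext(D(x))$ onto the single vector $e^{*}$; the composite $P=P_{2}P_{1}$ is then a $w^{*}$-continuous norm-one projection of $X^{*}$ onto a subspace $V$ that is $w^{*}$-isometric to $c^{*}$, and dualizing $P$ and this isometry yields the isometric embedding $c\hookrightarrow X$.

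Your plan, by contrast, wants $y^{*}$ to be extreme ``or taken so'' and then to build the copy of $c$ by hand in $X$ via a biorthogonal system and a Schur-type splitting of $x_{n}^{*}-y^{*}$. The first step is exactly where the argument breaks: in an $\ell_{1}$-predual the $w^{*}$-limit of a sequence of extreme functionals can be \emph{any} point of the face $D(x)$, and there is no mechanism to replace it by an extreme one (nor to pass to a subsequence with an extreme limit). Handling a genuinely non-extreme $e^{*}$ is precisely what the inclusion of all of $\ext(D(x))$ in $K$ and the projection $P_{2}$ accomplish, and it is also why (BD), rather than (GM), is the correct intermediate condition here. The ``classical characterizations'' you invoke (Michael--Pe\l czy\'nski, Lindenstrauss--Wulbert) do not deliver this step; without the Alspach/Gasparis projection machinery, or an honest substitute, the construction of the isometric copy of $c$ is not actually carried out.
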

\begin{proof}
The implication (1) $\Rightarrow$ (2) is straightforward to prove
since it is well known that $c$ is not a polyhedral space. The
implication (3) $\Rightarrow$ (1) is proved in Theorem 1 in
\cite{Durier =000026 Papini} (see also Theorem 1.2 in \cite{Fonf and
Vesely}). Therefore it remains to prove only the implication (2)
$\Rightarrow$ (3).

By contradiction let us suppose that property (3) does not hold.
Therefore there exist a sequence $\left\{ x_{n}^{*}\right\}
_{n=1}^{+\infty}\subset\ext\left(B_{X^{*}}\right)\setminus D(x)$ and
a point $x\in S_{X}$ such that
\[
x_{n}^{*}(x)\underset{_{n\rightarrow+\infty}}{\longrightarrow}1.
\]
Whitout loss of generality we can consider a subsequence $\left\{
x_{n_{k}}^{*}\right\} $ of $\left\{ x_{n}^{*}\right\} $ such that
$x_{n_{k}}^{*}=e_{n_{k}}^{*}$ for every $k\in\mathbb{N}$ and an
element $e^{*}\in X^{*}$ such that
\[
e_{n_{k}}^{*}\stackrel[_{k\rightarrow+\infty}]{^{w^{*}}}{\longrightarrow}e^{*}.
\]
It is easily seen that $e^{*}(x)=1$ and $\left\Vert e^{*}\right\Vert
=1$ and hence $e^{*}\in D(x)$.

Let us recall that the set $D(x)$ is a weak$^{*}$-compact face. By
the Krein-Milman Theorem we have
\[
D(x)=\left\{
\sum_{i\in\Delta}\delta_{i}d_{i}^{*}:\,\sum_{i\in\Delta}\delta_{i}=1;\,\delta_{i}\geq0\,{\rm
for}\,{\rm all}\, i\in\Delta\right\} ,
\]
where $\left\{ d_{i}^{*}\right\}
_{i\in\Delta}=\ext\left(D(x)\right)$ and $\Delta$ is a finite or
countable set of indexes. Let us introduce the set
\[
K=\left\{ e_{n_{k}}^{*}\right\}
_{k=1}^{+\infty}\cup\ext\left(D(x)\right).
\]
We see at once that:

\begin{itemize}

\item [\rm {(i)}] $K$ is a countable subset of $\ext\left(B_{X^{*}}\right)$;

\item [\rm {(ii)}] $K\cap(-K)=\emptyset$;

\item [\rm {(iii)}] the set $Y=\overline{\Span(K)}$ is a weak$^{*}$-closed
set in $X^{*}$ (see Lemma 1 in \cite{Alspach 1992}).

\end{itemize}

We are now in position to apply Theorem 1.1 in \cite{Gasparis 2002}.
By using this result we obtain that there exists a
$w^{*}$-continuous contractive projection $P_{1}$ from $X^{*}$ onto
$Y$.

The task is now to find a $w^{*}$-continuous contractive projection
$P_{2}$ from $Y$ onto a subspace $V$ of $Y$ itself, such that $V$ is
isometric to $c^{*}$ (endowed with its natural $w^{*}$-topology). In
order to define such a projection, it is convenient to describe the
subspace $Y$ as:
\[
Y=\left\{ y^{*}\in X^{*}:\,
y^{*}=\sum_{k=1}^{+\infty}\alpha_{k}e_{n_{k}}^{*}+\sum_{i\in\Delta}\beta_{i}d_{i}^{*}\,,\,\sum_{k=1}^{+\infty}\left|\alpha_{k}\right|<+\infty,\,\sum_{i\in\Delta}\left|\beta_{i}\right|<+\infty\right\}
.
\]
Let us define the linear map $P_{2}:Y\rightarrow X^{*}$ by
\[
P_{2}(y^{*})=\sum_{k=1}^{+\infty}\alpha_{k}e_{n_{k}}^{*}+\left(\sum_{i\in\Delta}\beta_{i}\right)e^{*}.
\]
We remark that

\begin{itemize}

\item [\rm {(iv)}] $P_{2}$ is a projection from $Y$ onto its closed
subspace $V$ defined by
\[
V=\left\{ v\in X^{*}:\,
v=\sum_{k=1}^{+\infty}\alpha_{k}e_{n_{k}}^{*}+\theta e^{*}\mathrm{,
where}\,\sum_{k=1}^{+\infty}\left|\alpha_{k}\right|<+\infty,\,\theta\in\mathbb{R}\right\}
.
\]
Indeed, $e^{*}\in D(x)$. Hence there exists $\left\{
\varepsilon_{i}\right\} _{i\in\Delta}\subset\left[0,1\right]$ such
that $\sum_{i\in\Delta}\varepsilon_{i}=1$ and
$\sum_{i\in\Delta}\varepsilon_{i}d_{i}^{*}=e^{*}.$ Let us consider
$v=\sum_{k=1}^{+\infty}\alpha_{k}e_{n_{k}}^{*}+\theta e^{*}\in V$,
then
\[
P_{2}(v)=P_{2}\left(\sum_{k=1}^{+\infty}\alpha_{k}e_{n_{k}}^{*}+\theta\left(\sum_{i\in\Delta}\varepsilon_{i}d_{i}^{*}\right)\right)=\sum_{k=1}^{+\infty}\alpha_{k}e_{n_{k}}^{*}+\theta\left(\sum_{i\in\Delta}\varepsilon_{i}\right)e^{*}=v.
\]

\item [\rm {(v)}] $V$ is a $w^{*}$-closed subspace of $X^{*}$ (to
prove this fact it is sufficient to remember that
$w^{*}-\lim_{k\rightarrow+\infty}e_{n_{k}}^{*}=e^{*}$);

\item [\rm {(vi)}] $P_{2}$ is a $w^{*}$-continuous map;

\item [\rm {(vii)}] $\left\Vert P_{2}\right\Vert =1$ because $\Delta\cap\left\{ n_{k}\right\} _{k=1}^{+\infty}=\emptyset$.

\end{itemize}

By combining the two projections $P_{1}$ and $P_{2}$ introduced
above we now define the linear map $P:X^{*}\longrightarrow V$ such
that $P(x^{*})=P_{2}\left(P_{1}(x^{*})\right)$ for every $x^{*}\in
X^{*}.$ The map $P$ enjoys the following properties:

\begin{itemize}

\item [\rm {(viii)}] $P$ is a $w^{*}$-continuous and contractive
projection from $X^{*}$ onto $V$;

\end{itemize} the subspace $V$ is isometric to $c^{*}$ (endowed
with its standard $w^{*}$-topology) by means of the
$w^{*}$-continuous isometry $T:V\longrightarrow c^{*}$ defined by
\[
T(e^{*})=f_{1}^{*}\quad\mathrm{and}\quad
T(e_{n_{k}}^{*})=f_{k+1}^{*}\,\mathrm{for}\,\mathrm{all}\, k\geq1,
\]
where $\left\{ f_{n}^{*}\right\} _{n=1}^{+\infty}$ is the standard
basis of $c^{*}=\ell_{1}$.

Let us denote by $^{\perp}V$ the annihilator of $V\subseteq X^{*}$
in $X$; since $P$ and $T$ are $w^{*}$-continuous, there exist two
linear bounded operators
\[
S:\nicefrac{X}{^{\perp}V}\longrightarrow X\quad\mathrm{and}\quad
R:c\longrightarrow\nicefrac{X}{^{\perp}V}
\]
 such that $S^{*}=P$, $R^{*}=T$ and $R$ is an isometry. The linear
map $S\circ R:c\longrightarrow X$ is an isometric injection. This
fact concludes the proof.
\end{proof}
As a consequence of our results, Theorem 2.3 in \cite{Lazar =000026
Lindenstrauss 1971}, Theorem 5 in \cite{Lacey book}, p.226, and
Lemma 1 in \cite{Lacey book}, p.232, we obtain a complete
characterization of Lindenstrauss spaces that are polyhedral.
\begin{cor}
Let $X$ be a Lindenstrauss space. The following properties are
equivalent:

\begin{itemize}

\item [\rm {(1)}]$X$ does not contain an isometric copy of $c$;

\item [\rm {(2)}]$X$ is a polyhedral space.

\end{itemize}
\end{cor}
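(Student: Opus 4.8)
The plan is to obtain the corollary from Theorem~\ref{thm:main result} by reducing an arbitrary Lindenstrauss space to the case of a predual of $\ell_{1}$. The implication $(2)\Rightarrow(1)$ is immediate: $c$ is not polyhedral and polyhedrality passes to subspaces (any finite-dimensional subspace of a subspace of $X$ is a finite-dimensional subspace of $X$), so a polyhedral space cannot contain an isometric copy of $c$.

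For $(1)\Rightarrow(2)$ I would argue by contraposition: assuming the Lindenstrauss space $X$ is \emph{not} polyhedral, I must produce an isometric copy of $c$ in $X$. Since $X^{*}$ is an $L_{1}(\mu)$-space, the extreme points of $B_{X^{*}}$ are signed normalized indicators of atoms of $\mu$, hence pairwise disjointly supported and spanning copies of $\ell_{1}$; in particular, a sequence of extreme points witnessing the failure of property (BD) for $X$ lives in a separable, $\ell_{1}$-like part of $X^{*}$. Using this together with the representation and permanence theory of Lindenstrauss spaces quoted before the statement --- Theorem~2.3 in \cite{Lazar =000026 Lindenstrauss 1971} and Theorem~5 and Lemma~1 in \cite{Lacey book} (pp.~226 and 232) --- one extracts a separable subspace $W$ of $X$ which is itself a predual of $\ell_{1}$ and for which (BD) still fails, i.e.\ which is not polyhedral. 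Theorem~\ref{thm:main result}, applied to $W$, then yields an isometric copy of $c$ in $W$, hence in $X$; this is the contrapositive of $(1)\Rightarrow(2)$, and with $(2)\Rightarrow(1)$ it gives the equivalence.

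The one nontrivial step --- and what the cited structural results are there to supply --- is this reduction to a predual of $\ell_{1}$. Reducing merely to a separable, or even to a separable Lindenstrauss, subspace does not suffice: the preduals of $\ell_{1}$ form a strictly smaller class (for instance $C[0,1]$ is a separable Lindenstrauss space but not a predual of $\ell_{1}$), and an arbitrary finite-dimensional witness of non-polyhedrality need not lie in any $\ell_{1}$-predual subspace of $X$. The reduction has to be carried out on the dual side, using that the relevant extreme points of $B_{X^{*}}$ are disjointly supported and generate $\ell_{1}$ and that the structure theory realizes the corresponding $\ell_{1}$-predual as a subspace of $X$; after that, Theorem~\ref{thm:main result} does the rest.
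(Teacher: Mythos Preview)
Your overall plan---reduce the general Lindenstrauss case to $\ell_{1}$-preduals via the structural results cited just before the corollary, then invoke Theorem~\ref{thm:main result}---is exactly the paper's route. The implication $(2)\Rightarrow(1)$ is fine.

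The detour through property~(BD) and the ``dual-side'' extraction, however, is both unnecessary and not fully justified as written. The cleaner argument the cited results support runs as follows: if $X$ is not polyhedral, some finite-dimensional subspace $F\subset X$ has a non-polytope ball; by Lemma~1 in \cite{Lacey book}, p.~232, $F$ lies in a \emph{separable Lindenstrauss} subspace $Y\subset X$, which is therefore non-polyhedral. Now the dichotomy for separable Lindenstrauss spaces (Theorem~2.3 in \cite{Lazar =000026 Lindenstrauss 1971}, Theorem~5 in \cite{Lacey book}, p.~226) says that either $Y^{*}=\ell_{1}$ or $Y$ contains an isometric copy of $C[0,1]$. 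In the second case $c\subset C[0,1]\subset Y\subset X$ and we are done; in the first case Theorem~\ref{thm:main result} applied to $Y$ gives $c\subset Y\subset X$.

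Your last paragraph therefore reflects a misconception: reducing to a separable Lindenstrauss subspace \emph{does} suffice, precisely because the non-$\ell_{1}$-predual branch of the dichotomy already produces $c$ via $C[0,1]$. There is no need to force the witness of non-polyhedrality into an $\ell_{1}$-predual subspace, nor to argue on the dual side that a (BD)-witnessing sequence of extreme points of $B_{X^{*}}$ restricts to extreme points of $B_{W^{*}}$---a step you would otherwise have to justify and do not.
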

In \emph{Concluding remarks} of \cite{Zippin1969}, Zippin stated
that a separable Lindenstrauss space $X$ contains a contractively
complemented isometric copy of $c$ whenever the unit ball of $X$ has
at least one extreme point. In \cite{Casini Miglierina =000026
Piasecki Ex} an example, built upon a careful study of the
hyperplanes of $c$, disproves this result. By recalling Proposition
3.1 in \cite{Casini Miglierina =000026 Piasecki FPP} and considering
the previous results, we obtain something more about the presence of
an isometric copy of $c$ in a separable Lindenstrauss space. In
particular, we show the existence of a contractively complemented
copy of $c$. The next result can be seen as a correct version of the
Zippin's result.
\begin{cor}
\label{cor:Zippin}Let $X$ be a separable Lindenstrauss space. The
following properties are equivalent:

\begin{itemize}

\item [\rm {(1)}]$X$ contains a contractively complemented isometric
copy of $c$;

\item [\rm {(2)}]there exists $x\in S_{X}$ such that $\sup\left\{ x^{*}(x)\,:\, x^{*}\in\mathrm{ext}\left(B_{X^{*}}\right)\setminus D(x)\right\} =1$
.

\end{itemize}
\end{cor}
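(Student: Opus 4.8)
The plan is to prove the two implications separately; the substantial one is (2) $\Rightarrow$ (1).

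For (1) $\Rightarrow$ (2) I would argue directly and briefly. If $X$ contains a contractively complemented isometric copy of $c$, then in particular it contains an isometric copy of $c$; since $c$ is not polyhedral and polyhedrality passes to subspaces, $X$ is not a polyhedral space, and hence, by Theorem 1 in \cite{Durier =000026 Papini} (property (BD) $\Rightarrow$ polyhedrality for every Banach space), $X$ does not enjoy property (BD). As $x^{*}(x)\le\|x^{*}\|\,\|x\|\le1$ for every $x\in S_{X}$ and $x^{*}\in B_{X^{*}}$, the failure of (BD) is exactly the assertion that the supremum in (2) equals $1$ for some $x\in S_{X}$, which is (2).

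For (2) $\Rightarrow$ (1) the starting point is that the proof of Theorem \ref{thm:main result} delivers more than it states: from a failure of (BD) at $x\in S_{X}$ it produces a $w^{*}$-continuous contractive projection $P$ of $X^{*}$ onto a $w^{*}$-closed subspace $V$ that is $w^{*}$-linearly isometric to $c^{*}$, together with the pre-adjoint operators $S:X/{}^{\perp}V\to X$ and $R:c\to X/{}^{\perp}V$ with $S^{*}=P$, $R^{*}=T$, $R$ an isometry and $S\circ R$ an isometric injection. I would add the short remark that such a configuration forces the copy $(S\circ R)(c)$ to be contractively complemented: $R$ is in fact an onto isomorphism (because $R^{*}=T$ is a surjective isometry), $S$ is an isometric embedding (because $P$ maps $B_{X^{*}}$ onto $B_{V}$), and, writing $\pi:X\to X/{}^{\perp}V$ for the quotient map, the operator $Q:=S\circ\pi$ satisfies $Q^{*}=P$, hence is a norm-one projection of $X$ with range $Q(X)=(S\circ R)(c)$. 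This already settles (2) $\Rightarrow$ (1) in the case $X^{*}=\ell_{1}$.

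It then remains to run the same construction for an arbitrary separable Lindenstrauss space $X$. Inspecting the proof of Theorem \ref{thm:main result}, the only place the hypothesis $X^{*}=\ell_{1}$ is used is the reduction to a countable set $K\subseteq\ext(B_{X^{*}})$ (the normalization $x_{n_{k}}^{*}=e_{n_{k}}^{*}$ and the countability of $\ext(D(x))$), which was free because $\ext(B_{\ell_{1}})=\{\pm e_{n}^{*}\}$; the remainder, namely the sign condition $K\cap(-K)=\emptyset$, the $w^{*}$-closedness of $\overline{\Span(K)}$ via \cite{Alspach 1992}, the projection $P_{1}$ from \cite{Gasparis 2002}, the auxiliary projection $P_{2}$, the composition $P=P_{2}\circ P_{1}$, and the pre-adjoint argument above, is insensitive to the ambient space. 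Since for a general separable Lindenstrauss space $\ext(B_{X^{*}})$ may be uncountable, the one step that needs a new idea is the extraction of a suitable countable $K$, and this is exactly where I would invoke Proposition 3.1 in \cite{Casini Miglierina =000026 Piasecki FPP} together with the separability of $X$ (which makes $D(x)$ $w^{*}$-metrizable): these should yield a countable $K\subseteq\ext(B_{X^{*}})$ with $K\cap(-K)=\emptyset$, with $\overline{\Span(K)}$ $w^{*}$-closed, containing a subsequence of the witnessing extreme functionals, and rich enough that the relevant $w^{*}$-limit point $e^{*}$ lies in $\overline{\Span(K)}$. Once such a $K$ is in hand the argument of Theorem \ref{thm:main result} applies verbatim. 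Thus the main obstacle is precisely this descent from the transparent $\ell_{1}$-predual picture, where a canonical basis is available and all relevant sets are automatically countable, to a general separable Lindenstrauss space, where one must control which functionals of $X^{*}$ are extreme and reduce to a countable subfamily; everything else is either a direct quotation of the proof of Theorem \ref{thm:main result} or a routine duality computation.
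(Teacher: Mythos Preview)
Your argument for $(1)\Rightarrow(2)$ is correct and is exactly the route the paper has in mind: containing $c$ forces non-polyhedrality, and the contrapositive of ``(BD) $\Rightarrow$ polyhedral'' then gives the failure of (BD), which is precisely condition~(2). Your observation that the proof of Theorem~\ref{thm:main result} already produces a \emph{contractively complemented} copy of $c$ in the $\ell_1$-predual case---via $Q=S\circ\pi$ with $Q^{*}=P$---is also correct and is implicitly what the paper is drawing on when it says ``considering the previous results''.

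Where your proposal diverges from the paper is in the passage to an arbitrary separable Lindenstrauss space. The paper does \emph{not} re-run the construction of Theorem~\ref{thm:main result} in that generality; it simply quotes Proposition~3.1 of \cite{Casini Miglierina =000026 Piasecki FPP} as a black box and combines it with the results already established. Your guess that Proposition~3.1 is a tool for extracting a suitable countable $K\subset\ext(B_{X^{*}})$ so that the Theorem~\ref{thm:main result} machinery ``applies verbatim'' is not how the paper uses it, and your claim that the hypothesis $X^{*}=\ell_{1}$ enters the proof of Theorem~\ref{thm:main result} \emph{only} through the countability of $K$ understates matters: the very definition of $P_{2}$, the description of $D(x)$ as countable $\ell_{1}$-combinations of its extreme points, and the Alspach lemma on $w^{*}$-closedness of $\overline{\Span(K)}$ all lean on the explicit $\ell_{1}$-coordinates, so ``verbatim'' is optimistic. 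One can indeed push the construction through for general separable $L_{1}$-preduals (the relevant span is still isometric to an $\ell_{1}$-space because distinct pairs $\pm e^{*}$ of extreme points are $\ell_{1}$-independent in any $L_{1}$-predual), but this requires a bit more care than you indicate, and in any case it is not the paper's route. In short: same ingredients, same citations, but the paper applies Proposition~3.1 directly to obtain the contractively complemented copy, whereas you attempt to rebuild it from scratch.
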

It is worth to underline that the assumption, assumed in
\cite{Zippin1969}, of the mere existence of an extreme point of
$B_{X}$ is far from to be sufficient in order to ensure the
existence of an isometric copy of $c$ in $X$, as shown by the
example considered in \cite{Casini Miglierina =000026 Piasecki Ex}.
Indeed, condition (2) in Corollary \ref{cor:Zippin} gives a much
more detailed description of the geometry of $B_{X^{*}}$ and the
point $x\in S_{X}$ is not necessarily an extreme point (for
instance, let us consider the space $X$ given by $c$ itself).

\end{document}